\documentclass[a4paper,oneside,11pt]{article}%
\usepackage{makeidx}
\usepackage[english]{babel}
\usepackage{amsmath}
\usepackage{amsfonts}
\usepackage{amssymb}
\usepackage{stmaryrd}
\usepackage{graphicx}
\usepackage{mathrsfs}
\usepackage{cite}
\usepackage[colorlinks,linkcolor=red,anchorcolor=blue,citecolor=blue,urlcolor=blue]{hyperref}
\usepackage[symbol*,ragged]{footmisc}
\providecommand{\U}[1]{\protect\rule{.1in}{.1in}}
\providecommand{\U}[1]{\protect \rule{.1in}{.1in}}

\pagenumbering{arabic}
\setlength{\textwidth}{145mm}
\setlength{\textheight}{225mm}
\headsep=20pt \topmargin=-5mm \oddsidemargin=0.46cm
\evensidemargin=0.46cm \raggedbottom
\newtheorem{theorem}{Theorem}[section]

\newtheorem{lemma}[theorem]{Lemma}

\newenvironment{proof}[1][Proof]{\noindent \textbf{#1.} }{\  \rule{0.5em}{0.5em}}
\numberwithin{equation}{section}

\begin{document}

\title{On the Fourth Power Moment of Fourier \\ Coefficients of Cusp Form}

\author{Jinjiang Li\footnotemark[1]\,\,\,\, \, \& \,\, Panwang Wang\,\footnotemark[2]\,\,\,\, \, \& \,\, Min Zhang\footnotemark[3]    \vspace*{-4mm} \\
$\textrm{\small Department of Mathematics, China University of Mining and Technology}^{*\,\dag\,\ddag}$
                    \vspace*{-4mm} \\
     \small  Beijing 100083, P. R. China  }

\footnotetext[3]{Corresponding author. \\
    \quad\,\, \textit{ E-mail addresses}: \href{mailto:jinjiang.li.math@gmail.com}{jinjiang.li.math@gmail.com} (J. Li),
    \href{mailto:panwangw@gmail.com}{panwangw@gmail.com} (P. Wang), \\
    \qquad\qquad\qquad\qquad\qquad \href{mailto:min.zhang.math@gmail.com}{min.zhang.math@gmail.com} (M. Zhang).   }

\date{}
\maketitle


{\textbf{Abstract}}: Let~$a(n)$~be the Fourier coefficients of a holomorphic cusp form of weight~$\kappa=2n\geqslant12$~for the full
modular group and $A(x)=\sum\limits_{n\leqslant x}a(n)$. In this paper, we establish
an asymptotic formula of the fourth power moment of $A(x)$ and prove that
\begin{equation*}
  \int_1^TA^4(x)\mathrm{d}x=\frac{3}{64\kappa\pi^4}s_{4;2}(\tilde{a}) T^{2\kappa}+O\big(T^{2\kappa-\delta_4+\varepsilon}\big)
\end{equation*}
with $\delta_4=1/8$, which improves the previous result.

{\textbf{Keywords:}} Cusp form; Fourier coefficient; mean value; asymptotic formula

\textbf{Mathematics Subject Classification 2010:} 11N37,\,11M06

\section{Introduction and main result}

   Let~$a(n)$~be the Fourier coefficients of a holomorphic cusp form of weight~$\kappa=2n\geqslant12$~for the full modular group. In 1974,
   Deligne\cite{Deligne} proved the following profound result
\begin{equation}\label{a-d}
    a(n)\ll n^{(\kappa-1)/2}d(n),
\end{equation}
 where~$d(n)$~denotes the Dirichlet divisor function and the implied constant in~$\ll$~is absolute.
Suppose~$x\geqslant2$~and define
\begin{equation}
  A(x):=\sum_{n\leqslant x}a(n).
\end{equation}
It is well known that~$A(x)$~has no main term and~$A(x)\ll x^{\kappa/2-1/6+\varepsilon}$.
 In 1973, Joris\cite{Joris} proved that
\begin{equation*}
   A(x)=\Omega_{\pm}\big(x^{\kappa/2-1/4}\log\log\log x\big).
\end{equation*}
In 1990, Ivi\'{c}\cite{Ivic} showed that there exist two points~$t_1$~and~$t_2$~in the interval~$[T,T+CT^{1/2}]$~such that
\begin{equation*}
  A(t_1)>Bt_1^{\kappa/2-1/4},\qquad A(t_2)<-Bt_2^{\kappa/2-1/4},
\end{equation*}
where~$B>0,\,C>0$~are constants. It is conjectured that
\begin{equation*}
  A(x)\ll x^{(\kappa-1)/2+1/4+\varepsilon}
\end{equation*}
is true for every~$\varepsilon.$~The evidence in support of this conjecture has been given by Ivi\'{c}\cite{Ivic}, who proved the following
square mean value formula of~$A(x)$, i.e.
\begin{equation*}
  \int_{1}^{T}A^2(x)\mathrm{d}x=\mathcal{C}_2T^{\kappa+1/2}+B(T),
\end{equation*}
where
\begin{equation*}
  \mathcal{C}_2=\frac{1}{(4\kappa+2)\pi^2}\sum_{n=1}^{\infty}a^2(n)n^{-\kappa-1/2},
\end{equation*}
\begin{equation*}
  B(T)\ll T^{\kappa}\log^5T,\quad B(T)=\Omega\bigg(T^{\kappa-1/4}\frac{(\log\log\log T)^3}{\log T}\bigg).
\end{equation*}
In\cite{Ivic}, Ivi\'{c} also proved the upper bound of eighth power moment of~$A(x)$,~that is
\begin{equation*}
   \int_1^TA^8(x)\mathrm{d}x\ll T^{4\kappa-1+\varepsilon}.
\end{equation*}
Cai\cite{Cai} studied the third and fourth power moments of~$A(x)$. He proved that
\begin{equation}\label{3-power}
  \int_1^TA^3(x)\mathrm{d}x=\mathcal{C}_3 T^{(6\kappa+1)/4}+O(T^{(6\kappa+1)/4-\delta_3+\varepsilon}),
\end{equation}
\begin{equation}\label{4-power}
  \int_1^TA^4(x)\mathrm{d}x=\mathcal{C}_4 T^{2\kappa}+O(T^{2\kappa-\delta_4+\varepsilon}),
\end{equation}
where~$\delta_3=1/14,\,\delta_4=1/23$~and
\begin{equation*}
  \mathcal{C}_3:=\frac{3}{4(6\kappa+1)\pi^3}\sum_{\substack{n,m,k\in\mathbb{N} \\ \sqrt{n}+\sqrt{m}=\sqrt{k}}}
               (nmk)^{-\kappa/2-1/4} a(n)a(m)a(k),
\end{equation*}
\begin{equation*}
  \mathcal{C}_4:=\frac{3}{64\kappa\pi^4}\sum_{\substack{n,m,k,\ell\in\mathbb{N} \\ \sqrt{n}+\sqrt{m}=\sqrt{k}+\sqrt{\ell}}}
    (nmk\ell)^{-\kappa/2-1/4} a(n)a(m)a(k)a(\ell).
\end{equation*}

  In\cite{Zhai-1}, Zhai proved that (\ref{3-power}) holds for~$\delta_3=1/4.$~Following the approach of Tsang\cite{Tsang}, Zhai\cite{Zhai-1} proved that the equation (\ref{4-power}) holds for~$\delta_4=2/41.$~This approach used the method of exponential sums. In particular, if the exponent
pair conjecture is true, namely, if~$(\varepsilon,1/2+\varepsilon)$~ is an exponent pair, then the equation~(\ref{4-power})~holds for~$\delta_4=1/14$. Later, combining the method of~\cite{Ivic-Sargos} and a deep result of Robert and Sargos\cite{Robert-Sargos}, Zhai\cite{Zhai-3} proved that the
equation (\ref{4-power}) holds for~$\delta_4=3/28$.~By a unified approach, Zhai\cite{Zhai-2} proved that the asymptotic formula
\begin{equation*}
   \int_1^TA^k(x)\mathrm{d}x=\mathcal{C}_kT^{1+k(2\kappa-1)/4}+O\big(T^{1+k(2\kappa-1)/4-\delta_k+\varepsilon}\big)
\end{equation*}
holds for~$3\leqslant k\leqslant7,$~where~$\mathcal{C}_k$~and~$0<\delta_k<1$~are explicit constants.

 The aim of this paper is to improve the value of~$\delta_4=3/28$,~which is achieved by Zhai\cite{Zhai-3}. The main result is the following
\begin{theorem}\label{4-power-theorem}
   We have
   \begin{equation*}
    \int_1^TA^4(x)\mathrm{d}x=\frac{3}{64\kappa\pi^4}s_{4;2}(\tilde{a}) T^{2\kappa}+O\big(T^{2\kappa-\delta_4+\varepsilon}\big)
   \end{equation*}
   with~$\delta_4=1/8,$~where
   \begin{equation*}
      s_{4;2}(\tilde{a})=\sum_{\substack{n,m,k,\ell\in\mathbb{N}^*\\ \sqrt{n}+\sqrt{m}=\sqrt{k}+\sqrt{\ell}}}
          \frac{a(n)a(m)a(k)a(\ell)}{(nmk\ell)^{\kappa/2+1/4}}.
    \end{equation*}
\end{theorem}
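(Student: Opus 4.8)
The plan is to realise $A(x)$ as an essentially finite trigonometric sum through a truncated Voronoi-type formula, raise it to the fourth power, and separate a diagonal part (the main term) from an off-diagonal part governed by the spacing of sums of two square roots. First I would record the truncated Voronoi formula: for $1\le N\le T$,
\begin{equation*}
A(x)=\frac{x^{\kappa/2-1/4}}{\pi\sqrt2}\sum_{n\le N}\frac{a(n)}{n^{\kappa/2+1/4}}\cos\!\Big(4\pi\sqrt{nx}-\phi\Big)+R_N(x)=:M_N(x)+R_N(x),
\end{equation*}
with a fixed phase $\phi$ depending only on $\kappa$ and the pointwise bound $R_N(x)\ll x^{\kappa/2+\varepsilon}N^{-1/2}$. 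Deligne's bound \eqref{a-d} gives $a(n)n^{-\kappa/2-1/4}\ll n^{-3/4+\varepsilon}$, so $M_N$ is a genuine finite sum with rapidly decaying coefficients. To pass from $\int_1^T A^4$ to $\int_1^T M_N^4$ I would use the identity $A^4-M_N^4=R_N(A^3+A^2M_N+AM_N^2+M_N^3)$ together with H\"older's inequality, the a priori estimate $\int_1^T A^4\,\mathrm dx\ll T^{2\kappa+\varepsilon}$ (which follows from Ivi\'c's eighth-moment bound by Cauchy--Schwarz), the companion bound $\int_1^T M_N^4\,\mathrm dx\ll T^{2\kappa+\varepsilon}$, and $\int_1^T R_N^4\,\mathrm dx\ll N^{-2}T^{2\kappa+1+\varepsilon}$. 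This yields $\int_1^T\big(A^4-M_N^4\big)\,\mathrm dx\ll N^{-1/2}T^{2\kappa+1/4+\varepsilon}$.

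Next I would expand $M_N^4$, write each fourfold product of cosines as $\tfrac1{16}\sum_{\varepsilon\in\{\pm1\}^4}\exp\big(i\sum_j\varepsilon_j(4\pi\sqrt{n_jx}-\phi)\big)$, and integrate in $x$. The diagonal terms are those with $\varepsilon_1\sqrt{n_1}+\varepsilon_2\sqrt{n_2}+\varepsilon_3\sqrt{n_3}+\varepsilon_4\sqrt{n_4}=0$; since the $n_j$ are positive this forces the two-plus-two-minus sign patterns, of which there are $\binom{4}{2}=6$, each carrying vanishing constant phase and hence contributing $\int_1^T x^{2\kappa-1}\,\mathrm dx\sim T^{2\kappa}/(2\kappa)$. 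Assembling the constant $(\pi\sqrt2)^{-4}=\tfrac1{4\pi^4}$, the factor $\tfrac1{16}$, the six patterns and $\tfrac1{2\kappa}$ reproduces exactly $\tfrac{3}{64\kappa\pi^4}\,s_{4;2}(\tilde a)\,T^{2\kappa}$, once the truncated diagonal sum is completed to the full series. The completion is legitimate because $s_{4;2}(\tilde a)$ converges absolutely: the number of exact solutions of $\sqrt{n_1}+\sqrt{n_2}=\sqrt{n_3}+\sqrt{n_4}$ with all $n_j\asymp M$ is $\ll M^{2+\varepsilon}$, each summand is $\ll M^{-3+\varepsilon}$, and the dyadic blocks therefore sum geometrically, with the tail over $\max_j n_j>N$ being $\ll N^{-1+\varepsilon}$.

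The main obstacle is the off-diagonal sum, i.e. the terms with $\rho:=\varepsilon_1\sqrt{n_1}+\cdots+\varepsilon_4\sqrt{n_4}\ne0$. Here the first-derivative test gives $\big|\int_1^T x^{2\kappa-1}\cos(4\pi\rho\sqrt x+c)\,\mathrm dx\big|\ll T^{2\kappa-1/2}|\rho|^{-1}$, so the off-diagonal contribution is
\begin{equation*}
\ll T^{2\kappa-1/2}\sum_{\substack{n_j\le N\\ \rho\ne0}}\frac{|a(n_1)a(n_2)a(n_3)a(n_4)|}{(n_1n_2n_3n_4)^{\kappa/2+1/4}\,|\rho|}.
\end{equation*}
The patterns that are not two-plus-two-minus have $|\rho|\gg\max_j n_j^{1/2}$ and are harmless; for $\rho=\sqrt{n_1}+\sqrt{n_2}-\sqrt{n_3}-\sqrt{n_4}$ I would localise $n_j\asymp M$, decompose dyadically in $|\rho|\asymp V$, and invoke the Robert--Sargos / Ivi\'c--Sargos near-solution count $\#\{n_j\asymp M:\ |\rho|\le V\}\ll M^{\varepsilon}\big(M^{7/2}V+M^2\big)$, valid for $M^{-3/2}\lesssim V\lesssim M^{1/2}$. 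Summing $V^{-1}$ against this count gives $\sum_{n_j\asymp M,\,\rho\ne0}|\rho|^{-1}\ll M^{7/2+\varepsilon}$; inserting $(n_1\cdots n_4)^{-3/4}\asymp M^{-3}$ and summing the dyadic blocks $M\le N$ yields a total off-diagonal bound $\ll T^{2\kappa-1/2}N^{1/2+\varepsilon}$. The genuinely delicate point is the multi-scale bookkeeping when the four variables lie in different dyadic ranges, where the spacing estimate must be applied to the dominant block while the remaining variables are controlled.

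Finally I would balance the two error terms. The reduction step forces $N$ large ($N^{-1/2}T^{1/4}\ll T^{-1/8}$, i.e. $N\gg T^{3/4}$), whereas the off-diagonal forces $N$ small ($T^{-1/2}N^{1/2}\ll T^{-1/8}$, i.e. $N\ll T^{3/4}$); the two constraints meet at $N\asymp T^{3/4}$, where both contribute $O(T^{2\kappa-1/8+\varepsilon})$ while the completion error $T^{2\kappa}N^{-1}=T^{2\kappa-3/4}$ is negligible. This produces $\delta_4=1/8$. The improvement over Zhai's $3/28$ rests on optimising this balance in tandem with the sharp spacing input rather than on any new arithmetic ingredient, so the quality of the Robert--Sargos estimate is precisely what caps the attainable exponent.
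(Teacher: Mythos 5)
Your overall architecture coincides with the paper's: Jutila's truncated formula with cutoff $\asymp T^{3/4}$, expansion of the fourth power into sign patterns, the diagonal producing $\frac{3}{64\kappa\pi^4}s_{4;2}(\tilde a)T^{2\kappa}$, the first-derivative test plus square-root spacing counts off the diagonal, and the balance at $N\asymp T^{3/4}$. But two steps are genuinely defective. First, your claim that every sign pattern other than two-plus-two-minus satisfies $|\rho|\gg\max_j n_j^{1/2}$ is false. It holds for the all-plus pattern (the paper's $S_4$), but not for three-plus-one-minus: $\rho=\sqrt{n_1}+\sqrt{n_2}+\sqrt{n_3}-\sqrt{n_4}$ can be nonzero and as small as $\asymp n_4^{-1/2}$, e.g. $n_1=n_2=n_3=t^2$, $n_4=9t^2+1$ gives $|\rho|\asymp 1/t$. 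These terms are exactly the paper's $S_3$; they need the same delicate spacing analysis as the $++--$ terms, which is why Lemma~\ref{Zhai-lemma-3} and Lemma~\ref{kong-lamma} are stated for both signs $\mathscr{A}_\pm$. As written, your argument leaves this entire off-diagonal family unestimated.

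Second, the part you postpone as ``multi-scale bookkeeping'' is not bookkeeping; it is where the exponent $1/8$ is won, and the single tool you name does not close it. Your count $\#\{n_j\asymp M:|\rho|\le V\}\ll M^{\varepsilon}(M^{7/2}V+M^2)$ applies only when all four variables share one dyadic range (even there your floor is off: for nonzero $\rho$ with all $n_j\asymp M$, Lemma~\ref{kong-lamma} gives $|\rho|\gg M^{-7/2}$, not $M^{-3/2}$, so $\sum_{\rho\ne0}|\rho|^{-1}\ll M^{11/2+\varepsilon}$ rather than $M^{7/2+\varepsilon}$; the conclusion survives only by retaining the $\min\big(T^{2\kappa},T^{2\kappa-1/2}|\rho|^{-1}\big)$ throughout and splitting at $M\asymp T^{1/7}$). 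In the genuinely mixed ranges, say $n\sim N$, $k\sim K$ with $N,K$ small and $m\asymp\ell\asymp L\asymp T^{3/4}$, the Robert--Sargos-type product bound of Lemma~\ref{Zhai-lemma-3} alone yields a contribution of order $T^{2\kappa-1/4}L^{1/4}\asymp T^{2\kappa-1/16}$, which misses the target $T^{2\kappa-1/8}$; conversely, naive ``fix three variables, count the fourth'' counts have a secondary term that blows up when $N,K$ grow. The paper resolves this by playing Lemma~\ref{Zhai-lemma-3} against the second count $\mathscr{A}_1\ll\Delta L^{1/2}NMK+NKL^{1/2+\varepsilon}$ of Lemma~\ref{Zhai-lemma-5}, taking the minimum of the two resulting bounds, and splitting into the cases $L\gg T^{1/3}$ versus $L\ll T^{1/3}$ (respectively $\delta\gg L^{-3/2}$ versus $\delta\ll L^{-3/2}$), with Lemma~\ref{kong-lamma} invoked precisely where both counts degrade. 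Without this interpolation between two different counting lemmas, your scheme caps at a weaker exponent than $\delta_4=1/8$.
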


\textbf{Notation.} Throughout this paper, $a(n)$ be the Fourier coefficients of a holomorphic cusp form of weight~$\kappa=2n\geqslant12$~for the full modular group; $d(n)$ denote the Dirichlet divisor function; $\tilde{a}(n):=a(n)n^{-\kappa/2+1/2}$;~$\|x\|$~denotes the distance from $x$ to the nearest integer, i.e., $\|x\|=\min\limits_{n\in\mathbb{Z}}|x-n|$. $[x]$~denotes the integer part of~$x$;~$n\sim N$~means~$N<n\leqslant2N$;~$n\asymp N$~means~$C_1N\leqslant n\leqslant C_2N$~with positive constants $C_1,\,C_2$
satisfying $C_1<C_2.$ $\varepsilon$ always denotes an arbitrary small positive constant which may not be the same at different occurances. We shall use the estimates $d(n)\ll n^{\varepsilon}$. Suppose $f:\mathbb{N}\to\mathbb{R}$ is any function satisfying $f(n)\ll n^\varepsilon,\,k\geqslant2$ is a fixed integer.  Define
\begin{equation}\label{s-k-f}
   s_{k;\ell}(f):=\sum_{\substack{n_1,\cdots,n_\ell,n_{\ell+1},\cdots,n_k\in\mathbb{N}^* \\
             \sqrt{n_1}+\cdots+\sqrt{n_l}=\sqrt{n_{\ell+1}}+\cdots+\sqrt{n_k}}}
             \frac{f(n_1)f(n_2)\cdots f(n_k)}{(n_1n_2\cdots n_k)^{3/4}},\qquad 1\leqslant\ell<k.
\end{equation}
We shall use $s_{k;\ell}(f)$  to denote both of the series (\ref{s-k-f}) and its value. Suppose $y>1$  is a large parameter,
and we define
\begin{equation*}
   s_{k;\ell}(f;y):=\sum_{\substack{n_1,\cdots,n_\ell,n_{\ell+1},\cdots,n_k\leqslant y \\
             \sqrt{n_1}+\cdots+\sqrt{n_l}=\sqrt{n_{\ell+1}}+\cdots+\sqrt{n_k}}}
             \frac{f(n_1)f(n_2)\cdots f(n_k)}{(n_1n_2\cdots n_k)^{3/4}},\qquad 1\leqslant\ell<k.
\end{equation*}

\section{Preliminary Lemmas }

\begin{lemma}\label{Tsang-lemma}
  If $g(x)$ and $h(x)$ are continuous real-valued functions of $x$ and $g(x)$ is monotonic, then
  \begin{equation*}
     \int_{a}^{b}g(x)h(x)\mathrm{d}x\ll \Big(\max_{a\leqslant x\leqslant b}|g(x)|\Big)
     \bigg(\max_{a\leqslant u<v\leqslant b}\bigg|\int_u^v h(x)\mathrm{d}x\bigg|\bigg).
  \end{equation*}
\end{lemma}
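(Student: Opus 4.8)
The plan is to derive this as a direct consequence of the second mean value theorem for integrals (in Weierstrass's form), which is applicable precisely because $g$ is assumed monotonic. First I would introduce the antiderivative $H(x):=\int_a^x h(t)\,\mathrm{d}t$. Since $h$ is continuous, $H$ is well defined and continuous on $[a,b]$, and for every $x\in[a,b]$ the value $H(x)=\int_a^x h(t)\,\mathrm{d}t$ is itself one of the quantities $\int_u^v h(t)\,\mathrm{d}t$ occurring on the right-hand side (namely with $u=a$, $v=x$). Hence
\[
   \max_{a\leqslant x\leqslant b}|H(x)|\leqslant \max_{a\leqslant u<v\leqslant b}\bigg|\int_u^v h(t)\,\mathrm{d}t\bigg|.
\]

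Because $g$ is monotonic and continuous on $[a,b]$, the second mean value theorem for integrals furnishes a point $\xi\in[a,b]$ with
\[
   \int_a^b g(x)h(x)\,\mathrm{d}x = g(a)\int_a^\xi h(x)\,\mathrm{d}x + g(b)\int_\xi^b h(x)\,\mathrm{d}x.
\]
I would then estimate the two terms separately. Each of $|g(a)|$ and $|g(b)|$ is at most $\max_{a\leqslant x\leqslant b}|g(x)|$, while both $\int_a^\xi h$ and $\int_\xi^b h$ are again of the form $\int_u^v h$ and are therefore bounded by $\max_{a\leqslant u<v\leqslant b}|\int_u^v h(t)\,\mathrm{d}t|$. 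Adding the two contributions yields the asserted bound with an absolute implied constant (in fact one may take $2$).

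An equivalent route, bypassing the mean value theorem, is Riemann--Stieltjes integration by parts: writing $\int_a^b g\,\mathrm{d}H = g(b)H(b)-\int_a^b H\,\mathrm{d}g$ and using that $g$ monotonic gives $\int_a^b|\mathrm{d}g|=|g(b)-g(a)|\leqslant 2\max_{a\leqslant x\leqslant b}|g(x)|$, after which bounding $H$ by its maximum recovers the same estimate. The only point that genuinely requires care is that $g$ is assumed merely monotonic and continuous, not differentiable; one must therefore invoke the Weierstrass form of the second mean value theorem (rather than the Bonnet form, which presupposes one-signed $g$) so that both endpoints $g(a)$ and $g(b)$ are treated symmetrically. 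Beyond that bookkeeping there is no substantial obstacle, and the lemma follows immediately.
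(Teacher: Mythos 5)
Your proof is correct. The paper itself offers no argument here---it simply cites Tsang's Lemma~1---and your appeal to the second mean value theorem (with the observation that $H(x)=\int_a^x h$ and the intermediate integrals $\int_a^\xi h$, $\int_\xi^b h$ are all of the admissible form $\int_u^v h$, giving the bound with constant $2$) is precisely the standard argument behind that cited lemma, so nothing further is needed.
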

\begin{proof}
  See Tsang \cite{Tsang}, Lemma 1.
\end{proof}

\begin{lemma}\label{cos-estimate}
  Suppose $A,\,B\in \mathbb{R},\,A\not=0.$ . Then we have
  \begin{equation*}
     \int_T^{2T}t^\alpha\cos(A\sqrt{t}+B)\mathrm{d}t\ll T^{1/2+\alpha}|A|^{-1}.
  \end{equation*}
\end{lemma}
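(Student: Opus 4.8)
The plan is to prove this by a single integration by parts, exploiting the fact that the oscillatory factor $\cos(A\sqrt t+B)$ is, up to an algebraic weight, an exact derivative. The key observation is that
\begin{equation*}
   \frac{\mathrm{d}}{\mathrm{d}t}\sin(A\sqrt t + B) = \frac{A}{2\sqrt t}\cos(A\sqrt t + B),
\end{equation*}
so that $\cos(A\sqrt t + B) = \frac{2\sqrt t}{A}\,\frac{\mathrm{d}}{\mathrm{d}t}\sin(A\sqrt t + B)$. First I would insert this identity into the integral to rewrite it as $\frac{2}{A}\int_T^{2T} t^{\alpha+1/2}\,\mathrm{d}\!\big(\sin(A\sqrt t + B)\big)$, and then integrate by parts to move the derivative off the sine.

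Integration by parts produces a boundary term $\frac{2}{A}\big[t^{\alpha+1/2}\sin(A\sqrt t + B)\big]_T^{2T}$ together with a remaining integral $-\frac{2}{A}(\alpha+\tfrac12)\int_T^{2T} t^{\alpha-1/2}\sin(A\sqrt t + B)\,\mathrm{d}t$. The boundary term is immediately $\ll |A|^{-1}T^{\alpha+1/2}$, since $|\sin|\leqslant 1$ and $t^{\alpha+1/2}\asymp T^{\alpha+1/2}$ on $[T,2T]$. For the remaining integral I would bound the sine trivially by $1$ and estimate $\int_T^{2T} t^{\alpha-1/2}\,\mathrm{d}t \ll T^{\alpha-1/2}\cdot T = T^{\alpha+1/2}$, using that the interval has length $\asymp T$ and that $t^{\alpha-1/2}\asymp T^{\alpha-1/2}$ throughout. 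Multiplying by $|A|^{-1}$ again gives $\ll |A|^{-1}T^{\alpha+1/2}$, and combining the two contributions yields the claimed bound.

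There is no serious obstacle here: the estimate is essentially the first-derivative test, and the single gain of $|A|^{-1}$ from one integration by parts is exactly what the statement asks for. The only point requiring a little care is the presence of the algebraic weight $t^\alpha$ with $\alpha$ of arbitrary sign; but since we integrate over the dyadic range $[T,2T]$, every power $t^\beta$ is comparable to $T^\beta$ (the extrema occurring at the endpoints), so no logarithmic losses or edge effects arise. An equivalent route would be to first substitute $u=\sqrt t$, turning the integral into $2\int_{\sqrt T}^{\sqrt{2T}} u^{2\alpha+1}\cos(Au+B)\,\mathrm{d}u$ and then integrate by parts in $u$; this leads to the same two terms and the same final bound.
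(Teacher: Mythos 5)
Your proof is correct, and it takes a genuinely different (though closely related) route from the paper's. The paper disposes of this lemma in one line by invoking Lemma~\ref{Tsang-lemma} (Tsang's monotonic-weight lemma, essentially the second mean value theorem for integrals): take $g(t)=t^\alpha$, monotonic on $[T,2T]$ with $\max|g|\asymp T^{\alpha}$, and $h(t)=\cos(A\sqrt{t}+B)$, whose integral over any subinterval $[u,v]\subseteq[T,2T]$ is $\ll T^{1/2}|A|^{-1}$ (seen by substituting $s=\sqrt{t}$, which turns it into $\int 2s\cos(As+B)\,\mathrm{d}s$, and using that $\cos(As+B)$ has the bounded antiderivative $\sin(As+B)/A$, e.g.\ via a second application of the same lemma); the product of the two factors is the claimed bound. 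Your argument instead performs the integration by parts explicitly: the identity $\cos(A\sqrt{t}+B)=\frac{2\sqrt{t}}{A}\frac{\mathrm{d}}{\mathrm{d}t}\sin(A\sqrt{t}+B)$ is right, the boundary term and the term with $(\alpha+\tfrac12)t^{\alpha-1/2}$ are both correctly bounded by $T^{\alpha+1/2}$ on the dyadic range (implied constants depending on $\alpha$, which is harmless since $\alpha=2\kappa-1$ is fixed in the application), and the single factor $|A|^{-1}$ comes out exactly as needed. What the paper's route buys is brevity, since the monotonicity lemma is already in its toolkit; what yours buys is a self-contained, fully elementary verification that does not appeal to Lemma~\ref{Tsang-lemma} at all, and that makes the mechanism of the $|A|^{-1}$ gain transparent. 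Your closing remark that one could equally substitute $u=\sqrt{t}$ first and integrate by parts in $u$ is also accurate, and is in fact the natural bridge between your computation and the paper's.
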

\begin{proof}
   It follows from Lemma \ref{Tsang-lemma} easily.
\end{proof}

\begin{lemma}\label{kong-lamma}
   If $n,m,k,\ell\in\mathbb{N}$ such that $\sqrt{n}+\sqrt{m}\pm\sqrt{k}-\sqrt{\ell}\not=0$, then there hold
   \begin{equation*}
      |\sqrt{n}+\sqrt{m}\pm\sqrt{k}-\sqrt{\ell}|\gg(nmk\ell)^{-1/2}\max(n,m,k,\ell)^{-3/2},
   \end{equation*}
    respectively.
\end{lemma}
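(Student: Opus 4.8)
The plan is to bound $|\alpha|$ from below, where $\alpha:=\sqrt n+\sqrt m\pm\sqrt k-\sqrt\ell\neq 0$, by successive rationalization: I multiply $\alpha$ by its sign-conjugates until all surds disappear and I am left with a rational integer $M$ whose size I can control. I treat the two signs separately and describe the case $\alpha=\sqrt n+\sqrt m-\sqrt k-\sqrt\ell$, the other being entirely analogous after regrouping the four radicals into two pairs (one checks that it leads to the same integer $M$ below). Grouping $\alpha=(\sqrt n+\sqrt m)-(\sqrt k+\sqrt\ell)$ and multiplying by $F_1:=(\sqrt n+\sqrt m)+(\sqrt k+\sqrt\ell)$ gives $\beta:=\alpha F_1=c+2(\sqrt{nm}-\sqrt{k\ell})$ with $c:=n+m-k-\ell\in\mathbb{Z}$. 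Multiplying $\beta$ by its conjugate $\beta':=c-2(\sqrt{nm}-\sqrt{k\ell})$ gives $\gamma:=\beta\beta'=d+8\sqrt{nmk\ell}$ with $d:=c^2-4nm-4k\ell\in\mathbb{Z}$, and finally multiplying by $\gamma':=d-8\sqrt{nmk\ell}$ yields the integer $M:=\gamma\gamma'=d^2-64nmk\ell$.

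Write $X:=\max(n,m,k,\ell)$. The elementary upper bounds $F_1\ll X^{1/2}$ and $|\beta'|\ll X$ are immediate. In the generic case $M\neq 0$ one has $|M|\geq 1$, and since $M=\alpha F_1\beta'\gamma'$ this also forces $\beta'\neq 0$ and $\gamma'\neq 0$; hence $|\alpha|\geq (F_1|\beta'||\gamma'|)^{-1}$, and everything hinges on bounding $|\gamma'|$. The crude bound $|\gamma'|\ll X^2$ only yields $|\alpha|\gg X^{-7/2}$, which is too weak when the four variables are unbalanced; the point is to replace it by $|\gamma'|\ll (nmk\ell)^{1/2}$. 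I would obtain this by a dichotomy: if $|\alpha|>(nmk\ell)^{-1/2}X^{-3/2}$ the assertion already holds, so I may assume the opposite, in which case $|\gamma|=|\alpha|F_1|\beta'|\ll|\alpha|X^{3/2}\leq(nmk\ell)^{-1/2}\leq 1$; since $\gamma'=\gamma-16\sqrt{nmk\ell}$, this forces $|\gamma'|\ll(nmk\ell)^{1/2}$. Feeding the three estimates into $|\alpha|\geq(F_1|\beta'||\gamma'|)^{-1}$ gives exactly $|\alpha|\gg(nmk\ell)^{-1/2}X^{-3/2}$.

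It remains to dispose of the degenerate case $M=0$ (with $\alpha\neq 0$). Since $F_1>0$ and $\alpha\neq 0$, vanishing of $M=\alpha F_1\beta'\gamma'$ forces $\beta'=0$ or $\gamma'=0$. If $\beta'=0$ then $\beta=\alpha F_1=2c$ is a nonzero integer, whence $|\alpha|=2|c|/F_1\gg X^{-1/2}$; if $\gamma'=0$ and $\beta'\neq 0$ then $\gamma=\alpha F_1\beta'=16\sqrt{nmk\ell}$, whence $|\alpha|\gg(nmk\ell)^{1/2}X^{-3/2}$. In both subcases the resulting bound dominates the asserted one, using $nmk\ell\geq 1$ and $X\geq 1$, so this case is complete.

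I expect the main obstacle to be precisely the bound on $|\gamma'|$: the rationalization, pushed blindly, loses a factor and produces only the exponent $-7/2$ in $X$ alone, whereas the statement needs the sharper mixed quantity $(nmk\ell)^{-1/2}X^{-3/2}$. The device that rescues the argument is the dichotomy above, which exploits the fact that exactly in the range where $\alpha$ is small, the intermediate quantity $\gamma$ is forced to be tiny, so that $\gamma'$ inherits the clean size $(nmk\ell)^{1/2}$ instead of the wasteful $X^2$. A secondary, purely bookkeeping point is the treatment of the vanishing intermediate conjugates and the verification that the $+$ and $-$ patterns in $\pm\sqrt k$ both reduce to the same norm $M=d^2-64nmk\ell$.
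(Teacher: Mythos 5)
The paper itself gives no proof of this lemma---it simply cites Lemma~3.2.1 of Kong's thesis---so your rationalization argument supplies exactly the self-contained proof that the paper outsources, and it is the standard one for such lower bounds. Your treatment of the case $\alpha=\sqrt{n}+\sqrt{m}-\sqrt{k}-\sqrt{\ell}$ is complete and correct: the chain $M=\alpha F_1\beta'\gamma'=d^2-64nmk\ell\in\mathbb{Z}$ is computed correctly (indeed $M$ is the product of all eight sign-conjugates, which confirms your claim that both sign patterns lead to the same integer), the dichotomy that upgrades the wasteful bound $|\gamma'|\ll X^2$ to $|\gamma'|\ll(nmk\ell)^{1/2}$ precisely in the range where the inequality is at stake is the key idea and is carried out correctly, and the degenerate sub-cases $\beta'=0$ and $\gamma'=0$ are disposed of properly.

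The one place where ``entirely analogous'' conceals a real (though small) gap is the $+$ sign case. For $\alpha_+=\sqrt{n}+\sqrt{m}+\sqrt{k}-\sqrt{\ell}$, however you group the radicals into two pairs, the first conjugate multiplier---say $F_1^+=\sqrt{n}+\sqrt{m}-\sqrt{k}+\sqrt{\ell}$---is no longer automatically positive, and it can actually vanish: take $n=m=\ell=1$, $k=9$, where $\sqrt{n}+\sqrt{m}+\sqrt{\ell}=3=\sqrt{k}$. Your degenerate-case argument ``vanishing of $M=\alpha F_1\beta'\gamma'$ forces $\beta'=0$ or $\gamma'=0$'' invokes $F_1>0$, so in the $+$ case it needs one further sub-case. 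The fix is one line: if $F_1^+=0$ then $\sqrt{k}=\sqrt{n}+\sqrt{m}+\sqrt{\ell}$, hence $\alpha_+=2(\sqrt{n}+\sqrt{m})\geqslant 2$, which far exceeds the required bound since $(nmk\ell)^{-1/2}\max(n,m,k,\ell)^{-3/2}\leqslant 1$. With that line added (and noting that the generic case $|M|\geqslant 1$ and the other degenerate sub-cases go through verbatim with $F_1^+$ in place of $F_1$, since only the upper bound $|F_1^+|\ll X^{1/2}$ is needed there), your proof is complete for both signs.
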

\begin{proof}
   See Kong~\cite{Kong}, Lemma~3.2.1.
\end{proof}

\begin{lemma}\label{error-lamma}
  Let $f:\mathbb{N}\to\mathbb{R}$ be any function satisfying $f(n)\ll n^{\varepsilon}$. Then we have
  \begin{equation*}
     \big|s_{k;\ell}(f)-s_{k;\ell}(f;y)\big|\ll y^{-1/2+\varepsilon}, \quad 1\leqslant\ell< k,
  \end{equation*}
  where $k\geqslant2$ is a fixed integer.
\end{lemma}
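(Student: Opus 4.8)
The plan is to estimate the tail directly. By definition, $s_{k;\ell}(f)-s_{k;\ell}(f;y)$ is the sum of $f(n_1)\cdots f(n_k)(n_1\cdots n_k)^{-3/4}$ over those tuples $(n_1,\dots,n_k)$ satisfying $\sqrt{n_1}+\cdots+\sqrt{n_\ell}=\sqrt{n_{\ell+1}}+\cdots+\sqrt{n_k}$ for which $\max_i n_i>y$. Using $f(n)\ll n^\varepsilon$, it suffices to bound
\[
   T(y):=\sum_{\substack{\sqrt{n_1}+\cdots+\sqrt{n_\ell}=\sqrt{n_{\ell+1}}+\cdots+\sqrt{n_k}\\ \max_i n_i>y}}(n_1 n_2\cdots n_k)^{-3/4+\varepsilon}.
\]
The same estimate with $y\to\infty$ also shows that the defining series $s_{k;\ell}(f)$ converges absolutely, so that the left-hand side of the asserted bound is well defined.

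First I would decompose dyadically according to the size $M$ of the largest variable, with $M>y$, and reduce to the per-shell bound
\[
   W(M):=\sum_{\substack{\sqrt{n_1}+\cdots+\sqrt{n_\ell}=\sqrt{n_{\ell+1}}+\cdots+\sqrt{n_k}\\ \max_i n_i\asymp M}}(n_1\cdots n_k)^{-3/4+\varepsilon}\ll M^{-1/2+\varepsilon},
\]
since summing this over dyadic $M>y$ immediately gives $T(y)\ll y^{-1/2+\varepsilon}$. A useful structural observation is that, because $1\leqslant\ell<k$ forces both sides of the equation to be nonempty sums of positive square roots, the largest term on the left and the largest term on the right must be of the same order $\asymp M$; thus at least two variables, one on each side, are of size $\asymp M$. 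The dominant contribution to $W(M)$ comes from the ``diagonal'' configurations in which these two large variables coincide (when $k=2\ell$, the whole tuple is a matching of the left and right variables). Fixing such a matched pair $n_i=n_j\asymp M$ and summing the remaining variables against the convergent lower-order series $s_{k-2;\ell-1}(f)$ produces the factor $\sum_{n\asymp M}n^{-3/2+\varepsilon}\asymp M^{-1/2+\varepsilon}$, which is exactly the claimed size of $W(M)$; this is where the exponent $-1/2$ originates.

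The remaining, and genuinely harder, contribution to $W(M)$ comes from the off-diagonal (unmatched) solutions, where the two large variables satisfy $\sqrt{n_i}-\sqrt{n_j}=D\neq0$ for a quantity $D$ built from the smaller variables. Here the naive bound — solving the equation for one variable in terms of the other $k-1$ and summing those freely — is too weak, because summing $n^{-3/4}$ up to $M$ grows like $M^{1/4}$ and overwhelms the savings; this crude count only yields $W(M)\ll M^{(k-4)/4+\varepsilon}$, which is useless once $k\geqslant4$. What must be exploited instead is the arithmetic rigidity of equal sums of square roots: an identity $\sqrt{n_1}+\cdots+\sqrt{n_\ell}=\sqrt{n_{\ell+1}}+\cdots+\sqrt{n_k}$ forces strong Diophantine constraints among the $n_i$ (for instance, in the four-variable case the off-diagonal solutions must share a common squarefree part, so that $n_1=tu^2,\,n_2=tv^2,\,n_3=t{u'}^2,\,n_4=t{v'}^2$ with $u+v=u'+v'$), and the same rigidity underlies the separation estimate of Lemma~\ref{kong-lamma}. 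Parametrizing the off-diagonal solutions in this way and estimating the resulting series shows that they contribute $\ll M^{-3/4+\varepsilon}$, of strictly smaller order than the diagonal term. Combining the two parts gives $W(M)\ll M^{-1/2+\varepsilon}$ and hence the lemma. I expect this off-diagonal counting — showing that the exact square-root equation has few enough solutions, uniformly in the relative sizes of the variables — to be the main obstacle, since it is precisely the place where one cannot get away with treating the constraint as merely determining a single variable.
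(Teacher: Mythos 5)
Your outline picks the right mechanism, but as a proof it has a genuine hole, and the hole sits exactly where the content of the lemma lies. (For calibration: the paper itself does not prove this lemma at all --- its ``proof'' is the citation to Zhai \cite{Zhai-2}, Lemma~3.1, whose argument is essentially the route you sketch: dyadic shells, diagonal versus off-diagonal solutions, and the parametrization of exact square-root relations through common squarefree kernels, i.e.\ the linear independence over $\mathbb{Q}$ of $\sqrt{q}$ for distinct squarefree $q$.) Your diagonal analysis is fine modulo an induction on $k$ (you invoke convergence of $s_{k-2;\ell-1}$, which is part of what is being proved, so the lemma must be organized as an induction with $k=2$ as base). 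But the off-diagonal contribution is never actually estimated: you write down the kernel parametrization only for $k=4$ and then assert that ``estimating the resulting series shows that they contribute $\ll M^{-3/4+\varepsilon}$,'' while conceding that this counting is ``the main obstacle.'' That step is not a detail --- it is the lemma. Worse, the gap is load-bearing in cases your decomposition cannot even reach: when $\ell=1$ or $\ell=k-1$ (and $k\geqslant3$) there are \emph{no} diagonal solutions whatsoever, since deleting a matched pair would leave a nonempty sum of positive square roots equal to zero; in those cases the entire tail bound is the off-diagonal count you postponed.

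What is missing can be filled, and it is worth recording how. By linear independence, any solution of $\sqrt{n_1}+\cdots+\sqrt{n_\ell}=\sqrt{n_{\ell+1}}+\cdots+\sqrt{n_k}$ partitions the indices into classes with a common squarefree kernel, $n_i=t\,c_i^2$ within a class of kernel $t$, such that in each class the sum of the $c_i$ coming from the left side equals the sum coming from the right side; in particular every class meets both sides. Estimating class by class in the shell $\max_i n_i\asymp M$: a class of size $2$ is a matched pair and, when it contains the largest variable, gives the dominant $\sum_{n\asymp M}n^{-3/2+\varepsilon}\ll M^{-1/2+\varepsilon}$; a class of size $\geqslant3$ containing the largest variable gives, after summing over its kernel $t$ and its balanced coefficients with $\max c_i\asymp(M/t)^{1/2}$, a contribution $\ll M^{-1+\varepsilon}$; classes not containing the largest variable contribute $O(1)$ by the same computation summed over all dyadic ranges. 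This yields $W(M)\ll M^{-1/2+\varepsilon}$ uniformly in $1\leqslant\ell<k$ and closes the proof. Finally, your parenthetical ``when $k=2\ell$, the whole tuple is a matching of the left and right variables'' is false --- $\sqrt{1}+\sqrt{49}=\sqrt{16}+\sqrt{16}$ is a non-matching solution of the four-variable equation --- but it is harmless since your argument never uses it.
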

\begin{proof}
   See Zhai~\cite{Zhai-2}, Lemma~3.1.
\end{proof}

\begin{lemma}\label{Zhai-lemma-5}
   Suppose $1\leqslant N\leqslant M,\,1\leqslant K\leqslant L,\,N\leqslant K,\,M\asymp L,\,0<\Delta\ll L^{1/2}$. Let $\mathscr{A}_1(N,M,K,L;\Delta)$
denote the number of solutions of the following inequality
\begin{equation*}
    0<|\sqrt{n}+\sqrt{m}-\sqrt{k}-\sqrt{\ell}|<\Delta
\end{equation*}
with $n\sim N,\,m\sim M,\,k\sim K,\,\ell\sim L$. Then we have
\begin{equation*}
   \mathscr{A}_1(N,M,K,L;\Delta)\ll \Delta L^{1/2}NMK+NKL^{1/2+\varepsilon}.
\end{equation*}
Especially, if $\Delta L^{1/2}\gg1$, then
\begin{equation*}
  \mathscr{A}_1(N,M,K,L;\Delta)\ll \Delta L^{1/2}NMK.
\end{equation*}
\end{lemma}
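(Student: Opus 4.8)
The plan is to prove Lemma~\ref{Zhai-lemma-5} by a direct lattice–point count, exploiting the fact that, since $M\asymp L$, the pair $(m,\ell)$ is \emph{balanced} ($m\asymp\ell$), while since $N\leqslant K$ the pair $(n,k)$ is the \emph{outer} pair. First I would dispose of the range $\Delta L^{1/2}\gg1$. Fixing $n,m,k$, the admissible $\ell$ satisfy $\sqrt{\ell}\in(\sqrt{n}+\sqrt{m}-\sqrt{k}-\Delta,\,\sqrt{n}+\sqrt{m}-\sqrt{k}+\Delta)$; on the support this centre is $\asymp L^{1/2}$, so $\ell$ runs over an interval of length $\ll\Delta L^{1/2}$ and there are $\ll 1+\Delta L^{1/2}\ll\Delta L^{1/2}$ choices. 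Summing over $n\sim N,\,m\sim M,\,k\sim K$ gives $\mathscr{A}_1\ll\Delta L^{1/2}NMK$, which is exactly the ``especially'' clause; moreover $\Delta L^{1/2}\gg1$ forces $NKL^{1/2+\varepsilon}\ll\Delta L^{1/2}NMK$, so only the regime $\Delta L^{1/2}\ll1$ needs the full bound.

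In that regime there is at most one admissible $\ell$ per triple, so I would reorganise the count by fixing the outer pair $(n,k)$ --- this is the origin of the factor $NK$ in the error term --- and counting the pairs $(m,\ell)$ with $0<|\sqrt{m}-\sqrt{\ell}-\beta|<\Delta$, where $\beta:=\sqrt{k}-\sqrt{n}$ and the strict inequality $0<|\cdots|$ excludes the exact solutions $\sqrt{m}-\sqrt{\ell}=\beta$. Writing $d:=m-\ell$ and $g_d(\ell):=\sqrt{\ell+d}-\sqrt{\ell}$, for each fixed $d$ the map $g_d$ is monotonic with $|g_d'(\ell)|\asymp d\,L^{-3/2}$, so by monotonicity the number of $\ell\sim L$ with $g_d(\ell)\in(\beta-\Delta,\beta+\Delta)$ is $\ll 1+\Delta L^{3/2}/d$. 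The admissible $d$ fill a range of length $\asymp\beta L^{1/2}$ about $2\beta L^{1/2}$, and the real root of $g_d(\ell)=\beta$ is $\ell^{*}(d)=(d-\beta^{2})^{2}/(4\beta^{2})$; an integer $\ell$ can occur only when $\|\ell^{*}(d)\|\ll\Delta L/\beta$.

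The $\Delta L^{3/2}/d$ part sums over $d$ to the expected main term $\ll\Delta L^{3/2}$ per pair, hence to $\Delta L^{1/2}NMK$ after summing over $(n,k)$. The delicate point is the ``$+1$'' part, namely the number of admissible $d$ for which an integer $\ell$ genuinely exists. The crux of the proof is therefore to show that the values $\ell^{*}(d)$, a quadratic polynomial in $d$, do not cluster modulo one: a van der Corput / Weyl estimate for $\sum_{d}e\big(h\,\ell^{*}(d)\big)$, fed through an Erd\H{o}s--Tur\'an inequality, bounds the number of such $d$ by $\ll\Delta L^{3/2}+(\beta L^{1/2})^{1/2+\varepsilon}\ll\Delta L^{3/2}+L^{1/2+\varepsilon}$, the second term being the square-root-cancellation (discrepancy) contribution over the $\ll L$ available values of $d$. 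Summing the per-pair bound $\ll\Delta L^{3/2}+L^{1/2+\varepsilon}$ over the $\ll NK$ pairs $(n,k)$ yields $\mathscr{A}_1\ll\Delta L^{1/2}NMK+NKL^{1/2+\varepsilon}$, as required.

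I expect this equidistribution step to be the main obstacle: the leading coefficient $1/(4\beta^{2})$ of $\ell^{*}(d)$ blows up as $\beta\to0$, so the Weyl bound must be made uniform in the possibly small parameter $\beta$, and it is precisely the failure of the naive ``$+1$ per $d$'' count (which would only give the trivial $L$) that forces this cancellation input. Here Lemma~\ref{kong-lamma} plays an auxiliary role: it excludes the exact solutions and, once $\Delta$ falls below the separation threshold $(nmk\ell)^{-1/2}\max(n,m,k,\ell)^{-3/2}$, renders the count vacuous, so the argument need only be carried out above that threshold. The whole scheme runs parallel to the spacing estimates of Robert and Sargos~\cite{Robert-Sargos} and Zhai~\cite{Zhai-2}, either of which may be invoked directly in place of the quadratic Weyl estimate.
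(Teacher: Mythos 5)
Your treatment of the easy regime $\Delta L^{1/2}\gg1$ (hence of the ``especially'' clause), the reduction to $\Delta L^{1/2}\ll1$, the reorganization into a count of pairs $(m,\ell)$ for each fixed $(n,k)$, and the evaluation of the smooth part $\sum_d \Delta L^{3/2}/|d|\ll\Delta L^{3/2}$ are all correct. But the lemma lives or dies with the step you defer: the claim that, uniformly in $\beta=\sqrt{k}-\sqrt{n}\ll L^{1/2}$, the number of $d$ in an interval of length $H\asymp\beta L^{1/2}$ with $\|\ell^{*}(d)\|\ll\Delta L/\beta$ is $\ll\Delta L^{3/2}+H^{1/2+\varepsilon}$. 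This is asserted, not proved, and the tools you name cannot deliver it. Feeding the van der Corput second-derivative bound $\sum_{d}e\big(h\ell^{*}(d)\big)\ll Hh^{1/2}\beta^{-1}+\beta h^{-1/2}$ into Erd\H{o}s--Tur\'an and optimizing the truncation gives only $\ll\Delta L^{3/2}+\beta^{1/3}L^{1/2}+\beta$, which at the top of the range $\beta\asymp L^{1/2}$ is $\ll\Delta L^{3/2}+L^{2/3}$; summed over the $NK$ pairs this yields $NKL^{2/3}$, not $NKL^{1/2+\varepsilon}$. Weyl's inequality beats this only when $h/(4\beta^{2})$ admits rational approximations with denominator of size comparable to $H$, and since $4\beta^{2}=4(k+n)-8\sqrt{kn}$ that is precisely the Diophantine information on $\sqrt{kn}$ that is unavailable uniformly; obtaining the exponent $1/2$ here by pure exponential-sum cancellation is of the same strength as the exponent-pair conjecture $(\varepsilon,1/2+\varepsilon)$, which the introduction of this very paper records as open (it would give $\delta_{4}=1/14$ in the Tsang--Zhai framework). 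There are also arithmetic cases with no cancellation at all: for $n=1$, $k=q^{2}$ one has $\beta=q-1$ and $\ell^{*}(d)=\big(d-(q-1)^{2}\big)^{2}/\big(4(q-1)^{2}\big)$, so $\ell^{*}(d)$ is an exact integer on a full arithmetic progression of $d$'s, and the Weyl sums degenerate; the count is then rescued not by discrepancy but by the exclusion $0<|\cdots|$ combined with a gcd/divisor argument modulo $q-1$ --- machinery absent from your scheme.

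Your fallback --- that the spacing results of Robert--Sargos or Zhai ``may be invoked directly in place of the quadratic Weyl estimate'' --- does not close the gap either. The Robert--Sargos count applies to quadruples lying in a single dyadic box, whereas the whole point of Lemma~\ref{Zhai-lemma-5} is the unbalanced ranges $N,K\leqslant L$; and Zhai's product bound, reproduced as Lemma~\ref{Zhai-lemma-3} above, is a \emph{complementary} estimate that does not imply Lemma~\ref{Zhai-lemma-5}: the proof of Theorem~\ref{4-power-theorem} uses the two lemmas as independent inputs and takes a minimum of the resulting bounds, which would be pointless if one followed from the other. (For what it is worth, the paper itself contains no internal proof to compare against; it quotes the statement from Zhai's \emph{On higher-power moments of $\Delta(x)$ (III)}, Lemma~5.) So your proposal is a sensible skeleton whose first half is correct, but its central counting estimate --- which is the entire content of the lemma --- remains unproven, as you yourself concede.
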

\begin{proof}
   See Zhai~\cite{Zhai-3}, Lemma~5.
\end{proof}

\begin{lemma}\label{Zhai-lemma-3}
   Suppose $N_j\geqslant2\,(j=1,2,3,4),\,\Delta>0$. Let $\mathscr{A}_\pm(N_1,N_2,N_3,N_4;\Delta)$ denote the number of
     solutions of the following inequality
\begin{equation*}
   0<|\sqrt{n_1}+\sqrt{n_2}\pm\sqrt{n_3}-\sqrt{n_4}|<\Delta
\end{equation*}
with $n_j\sim N_j\,(j=1,2,3,4),\,n_j\in\mathbb{N}^*$. Then we have
\begin{equation*}
  \mathscr{A}_\pm(N_1,N_2,N_3,N_4;\Delta)\ll\prod_{j=1}^4\big(\Delta^{1/4}N_j^{7/8}+N_j^{1/2}\big)N_j^{\varepsilon}.
\end{equation*}
\end{lemma}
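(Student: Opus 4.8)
The plan is to deduce this general, unequal-modulus bound from the classical \emph{equal}-modulus fourth-moment estimate of Robert and Sargos by a Fourier detection of the inequality followed by an application of H\"older's inequality that decouples the four variables. First I would dispose of the sign by relabelling: in the $+$ case one rewrites the condition as $|\sqrt{n_4}-\sqrt{n_1}-\sqrt{n_2}-\sqrt{n_3}|<\Delta$, so that in either case the exponential $e\big(u(\sqrt{n_1}+\sqrt{n_2}\pm\sqrt{n_3}-\sqrt{n_4})\big)$ (where $e(x)=e^{2\pi\mathrm{i}x}$) factors as a product of four one-variable exponentials. Next I would detect the inequality by a fixed nonnegative majorant: choose a Schwartz function $\psi$ with $\psi(x)\geqslant1$ for $|x|\leqslant1$, $\psi\geqslant0$, and $\widehat{\psi}\geqslant0$ supported in $[-1,1]$ (a rescaled Fej\'er kernel). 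Writing $\psi(x)=\Delta\int_{\mathbb{R}}\widehat{\psi}(\Delta u)e(xu)\,\mathrm{d}u$ and majorising the indicator of $(-\Delta,\Delta)$ by $\psi(\,\cdot\,/\Delta)$ gives
\[
   \mathscr{A}_\pm(N_1,N_2,N_3,N_4;\Delta)\ll \Delta\int_{\mathbb{R}}\widehat{\psi}(\Delta u)\prod_{j=1}^4\big|S_j(u)\big|\,\mathrm{d}u,
   \qquad S_j(u):=\sum_{n_j\sim N_j}e(u\sqrt{n_j}),
\]
the conjugates and signs being irrelevant after taking moduli. The point of the factorisation is that the four sums are now completely decoupled.

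I would then apply H\"older's inequality with four equal exponents to the nonnegative weight $\widehat{\psi}(\Delta u)$, obtaining
\[
   \mathscr{A}_\pm\ll \Delta\prod_{j=1}^4\bigg(\int_{\mathbb{R}}\widehat{\psi}(\Delta u)\big|S_j(u)\big|^4\,\mathrm{d}u\bigg)^{1/4}.
\]
For each fixed $j$, expanding $|S_j(u)|^4$ and integrating term by term turns the inner integral into $\Delta^{-1}\sum_{m_1,m_2,m_3,m_4\sim N_j}\psi\big((\sqrt{m_1}+\sqrt{m_2}-\sqrt{m_3}-\sqrt{m_4})/\Delta\big)$, which by the rapid decay of $\psi$ and a dyadic decomposition of its argument is controlled by the number of \emph{equal}-size near-solutions. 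Invoking the Robert--Sargos fourth-moment bound \cite{Robert-Sargos},
\[
   \#\big\{m_i\sim N_j:\ |\sqrt{m_1}+\sqrt{m_2}-\sqrt{m_3}-\sqrt{m_4}|<\Delta\big\}\ll N_j^{\varepsilon}\big(\Delta N_j^{7/2}+N_j^{2}\big),
\]
I obtain $\int_{\mathbb{R}}\widehat{\psi}(\Delta u)|S_j(u)|^4\,\mathrm{d}u\ll N_j^{\varepsilon}\big(N_j^{7/2}+\Delta^{-1}N_j^{2}\big)$. Substituting this into the H\"older product and distributing the outer factor $\Delta=\prod_{j}\Delta^{1/4}$ across the four terms yields
\[
   \mathscr{A}_\pm\ll\prod_{j=1}^4 N_j^{\varepsilon}\big(N_j^{7/8}+\Delta^{-1/4}N_j^{1/2}\big)\,\Delta^{1/4}
   =\prod_{j=1}^4 N_j^{\varepsilon}\big(\Delta^{1/4}N_j^{7/8}+N_j^{1/2}\big),
\]
which is exactly the asserted estimate.

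The substantive input is entirely the equal-modulus Robert--Sargos count, which is the deep ingredient; the reduction from it to the unequal-modulus product bound is the routine but essential part. The one place demanding care is the bookkeeping of the rapidly decaying tails when passing from the weighted integral $\Delta^{-1}\sum\psi(\cdot/\Delta)$ back to a sharp-cut solution count, since one must sum the Robert--Sargos bound over dyadic shells $|\sqrt{m_1}+\sqrt{m_2}-\sqrt{m_3}-\sqrt{m_4}|\asymp 2^{k}\Delta$ and check that the factor $2^{-kA}$ from the decay of $\psi$ makes the geometric series converge. Alternatively one may simply quote the unequal-modulus form of the Robert--Sargos theorem directly, the argument sketched above being precisely the standard route by which that form is obtained.
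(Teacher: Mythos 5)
Your proof is correct, and it is essentially the argument behind the result the paper invokes: the paper offers no proof of this lemma, citing Zhai \cite{Zhai-3}, Lemma 3, and that lemma is obtained from the Robert--Sargos equal-range fourth-moment count by exactly the Fourier-detection-plus-H\"older decoupling you describe. Your bookkeeping (the majorant $\psi$, the weighted H\"older step, the dyadic-shell summation absorbing the tails, and the final redistribution of $\Delta=\prod_j\Delta^{1/4}$) all checks out, so this is a faithful reconstruction of the cited source rather than a new route.
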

\begin{proof}
   See Zhai~\cite{Zhai-3}, Lemma~3.
\end{proof}

\section{Proof of Theorem~\ref{4-power-theorem}}
In this section, we shall prove the theorem. We begin with the following truncated formula, which is proved by Jutila\cite{Jutila}, i.e.,
\begin{equation}\label{truncated-formula}
    A(x)=\frac{1}{\sqrt{2}\pi}\sum_{n\leqslant N}\frac{a(n)}{n^{\kappa/2+1/4}}x^{\kappa/2-1/4}\cos(4\pi\sqrt{nx}-\pi/4)+O(x^{\kappa/2+\varepsilon } N^{-1/2}),
\end{equation}
where $1\leqslant N\ll x.$

 Suppose $T\geqslant10.$ By a splitting argument, it is sufficient to prove the result in the interval $[T,2T]$. Take $y=T^{3/4}$. For any
 $T\leqslant x\leqslant 2T$, by the truncated formula (\ref{truncated-formula}), we get
\begin{equation}
   A(x)=\frac{1}{\sqrt{2}\pi} \mathcal{R}(x)+O(x^{\kappa/2+\varepsilon}y^{-1/2}),
\end{equation}
where
\begin{equation*}
   \mathcal{R}(x):=x^{\kappa/2-1/4}\sum_{n\leqslant y}\frac{a(n)}{n^{\kappa/2+1/4}}\cos(4\pi\sqrt{nx}-\pi/4).
\end{equation*}
We have
\begin{eqnarray}\label{A^4-asymp}
   \int_T^{2T}A^4(x)\mathrm{d}x
   & = & \frac{1}{4\pi^4}\int_T^{2T}\mathcal{R}^{4}(x)\mathrm{d}x+O\big(T^{2\kappa+1/4+\varepsilon}y^{-1/2}+T^{2\kappa+1+\varepsilon}y^{-2}\big)  \nonumber \\
   & = & \frac{1}{4\pi^4}\int_T^{2T}\mathcal{R}^{4}(x)\mathrm{d}x+O(T^{2\kappa-1/8+\varepsilon}).
\end{eqnarray}

Let
\begin{equation*}
  g=g(n,m,k,\ell):=\left\{
  \begin{array}{cc}
        \displaystyle\frac{a(n)a(m)a(k)a(\ell)}{(nmk\ell)^{\kappa/2+1/4}}, & \textrm{if\,\,\,\,} n,m,k,\ell\leqslant y, \\
        0, & \textrm{otherwise}.
  \end{array}
  \right.
\end{equation*}
According to the elementary formula
\begin{equation*}
\cos{a_1}\cos{a_2}\cdots\cos{a_h}=\frac{1}{2^{h-1}}\sum_{(i_1,i_2\cdots,i_{h-1})\in\{0,1\}^{h-1}}\cos\big(a_1+(-1)^{i_1}a_2+\cdots+(-1)^{i_{h-1}}a_h\big),
\end{equation*}
we can write
\begin{equation}\label{R^4-fenjie}
 \mathcal{R}^4(x)=S_1(x)+S_2(x)+S_3(x)+S_4(x),
\end{equation}
where
\begin{eqnarray*}
  S_1(x) & := & \frac{3}{8} \sum_{\substack{n,m,k,\ell\leqslant y\\ \sqrt{n}+\sqrt{m}=\sqrt{k}+\sqrt{\ell}}} gx^{2\kappa-1}, \\
  S_2(x) & := & \frac{3}{8} \sum_{\substack{n,m,k,\ell\leqslant y\\ \sqrt{n}+\sqrt{m}\not=\sqrt{k}+\sqrt{\ell}}} gx^{2\kappa-1}
                            \cos\big(4\pi(\sqrt{n}+\sqrt{m}-\sqrt{k}-\sqrt{\ell})\sqrt{x}\big), \\
\end{eqnarray*}
\begin{eqnarray*}
  S_3(x) & := & \frac{1}{2} \sum_{\substack{n,m,k,\ell\leqslant y\\ \sqrt{n}+\sqrt{m}+\sqrt{k}\not=\sqrt{\ell}}} gx^{2\kappa-1}
                            \cos\Big(4\pi(\sqrt{n}+\sqrt{m}+\sqrt{k}-\sqrt{\ell})\sqrt{x}-\frac{\pi}{2}\Big), \\
  S_4(x) & := & \frac{1}{8} \sum_{n,m,k,\ell\leqslant y}gx^{2\kappa-1}
                            \cos\big(4\pi(\sqrt{n}+\sqrt{m}+\sqrt{k}+\sqrt{\ell})\sqrt{x}-\pi\big).
\end{eqnarray*}
By (\ref{a-d}) and Lemma \ref{error-lamma}, we get
\begin{eqnarray}\label{S_1-estimate}
  \int_T^{2T}S_1(x)\mathrm{d}x
     & = & \frac{3}{8}s_{4;2}\big(a(n)n^{-\kappa/2+1/2};y\big)\int_T^{2T}x^{2\kappa-1}\mathrm{d}x   \nonumber \\
     & = & \frac{3}{8}s_{4;2}(\tilde{a};y)\int_T^{2T}x^{2\kappa-1}\mathrm{d}x  \nonumber \\
     & = & \frac{3}{8}s_{4;2}(\tilde{a})\int_T^{2T}x^{2\kappa-1}\mathrm{d}x+O(T^{2\kappa}y^{-1/2+\varepsilon})  \nonumber \\
      & = & \frac{3}{8}s_{4;2}(\tilde{a})\int_T^{2T}x^{2\kappa-1}\mathrm{d}x+O(T^{2\kappa-3/8+\varepsilon}).
\end{eqnarray}

 We now proceed to consider the contribution of $S_4(x)$. Applying Lemma \ref{cos-estimate} and (\ref{a-d}), we get
\begin{eqnarray}\label{S_4-estimate}
  \int_T^{2T} S_4(x)\mathrm{d}x
        & = &    \frac{1}{8}\sum_{n,m,k,\ell\leqslant y}g\int_{T}^{2T}x^{2\kappa-1}
                 \cos\big(4\pi(\sqrt{n}+\sqrt{m}+\sqrt{k}+\sqrt{\ell})\sqrt{x}-\pi\big)\mathrm{d}x   \nonumber \\
        & \ll &  \sum_{n,m,k,\ell\leqslant y}\frac{gT^{2\kappa-1/2}}{\sqrt{n}+\sqrt{m}+\sqrt{k}+\sqrt{\ell}}  \nonumber \\
        & = &    T^{2\kappa-1/2}\sum_{n,m,k,\ell\leqslant y}\frac{a(n)a(m)a(k)a(\ell)}{(nmk\ell)^{(\kappa-1)/2}(nmk\ell)^{3/4}}
                 \cdot\frac{1}{\sqrt{n}+\sqrt{m}+\sqrt{k}+\sqrt{\ell}}   \nonumber \\
        & \ll &  T^{2\kappa-1/2}\sum_{n,m,k,\ell\leqslant y}\frac{d(n)d(m)d(k)d(\ell)}{(nmk\ell)^{3/4}\ell^{1/2}}    \nonumber \\
        & \ll &  T^{2\kappa-1/2+\varepsilon}\sum_{n,m,k,\ell\leqslant y}\frac{1}{(nmk)^{3/4}\ell^{5/4}}     \nonumber \\
        & \ll &  T^{2\kappa-1/2+\varepsilon}y^{1/2} \ll T^{2\kappa-1/8+\varepsilon}.
\end{eqnarray}

  Now let us consider the contribution of $S_2(x)$. By the first derivative test. we have
\begin{eqnarray}\label{S_2-estimate}
  \int_T^{2T}S_2(x)\mathrm{d}x
      & \ll & \sum_{\substack{n,m,k,\ell\leqslant y\\ \sqrt{n}+\sqrt{m}\not=\sqrt{k}+\sqrt{\ell}}}
              g\min\bigg(T^{2\kappa},\frac{T^{2\kappa-1/2}}{|\sqrt{n}+\sqrt{m}-\sqrt{k}-\sqrt{\ell}|}\bigg)    \nonumber \\
      & \ll & x^{\varepsilon} \mathcal{G}(N,M,K,L),
\end{eqnarray}
where
\begin{equation*}
   \mathcal{G}(N,M,K,L)=\sum_{\substack{\sqrt{n}+\sqrt{m}\not=\sqrt{k}+\sqrt{\ell}\\
                              n\sim N,m\sim M,k\sim K,\ell\sim L\\1\leqslant N\leqslant M\leqslant y\\1\leqslant K\leqslant L\leqslant y}}
    g\cdot \min\bigg(T^{2\kappa},\frac{T^{2\kappa-1/2}}{|\sqrt{n}+\sqrt{m}-\sqrt{k}-\sqrt{\ell}|}\bigg).
\end{equation*}
If $M\geqslant200L$, then $|\sqrt{n}+\sqrt{m}-\sqrt{k}-\sqrt{\ell}|\gg M^{1/2}$, so the trivial estimate yields
\begin{equation*}
  \mathcal{G}(N,M,K,L)\ll \frac{T^{2\kappa-1/2+\varepsilon}NMKL}{(NMKL)^{3/4}M^{1/2}}\ll
      T^{2\kappa-1/2+\varepsilon}y^{1/2}\ll T^{2\kappa-1/8+\varepsilon}.
\end{equation*}
If $L\geqslant200M$, we can get the same estimate.  So later we always suppose that $M\asymp L$.
Let $\eta=\sqrt{n}+\sqrt{m}-\sqrt{k}-\sqrt{\ell}$. Write
\begin{equation}\label{G-fenjie}
  \mathcal{G}(N,M,K,L)=\mathcal{G}_1+\mathcal{G}_2+\mathcal{G}_3,
\end{equation}
where
\begin{eqnarray*}
   \mathcal{G}_1 & := & T^{2\kappa}\sum_{0<|\eta|\leqslant T^{-1/2}}g, \\
   \mathcal{G}_2 & := & T^{2\kappa-1/2}\sum_{T^{-1/2}<|\eta|\leqslant 1}g|\eta|^{-1}, \\
   \mathcal{G}_3 & := & T^{2\kappa-1/2}\sum_{|\eta|>1}g|\eta|^{-1}.
\end{eqnarray*}
We estimate $\mathcal{G}_1$ first. By Lemma \ref{Zhai-lemma-5}, we get
\begin{eqnarray}\label{G_1-111}
   \mathcal{G}_1 & \ll & \frac{T^{2\kappa+\varepsilon}}{(NMKL)^{3/4}}\mathscr{A}_1\big(N,M,K,L;T^{-1/2}\Big) \nonumber  \\
     & \ll & \frac{T^{2\kappa+\varepsilon}}{(NMKL)^{3/4}}\big(T^{-1/2}L^{1/2}NMK+NKL^{1/2}\big)    \nonumber  \\
     & \ll & T^{2\kappa-1/2+\varepsilon}(NK)^{1/4}+T^{2\kappa+\varepsilon}(NK)^{1/4}L^{-1}     \nonumber  \\
     & \ll & T^{2\kappa-1/2+\varepsilon}y^{1/2}+T^{2\kappa+\varepsilon}(NK)^{1/4}L^{-1}        \nonumber  \\
     & \ll & T^{2\kappa-1/8+\varepsilon}+T^{2\kappa+\varepsilon}(NK)^{1/4}L^{-1}.
\end{eqnarray}
On the other hand, by Lemma \ref{Zhai-lemma-3}, without loss of generality, we assume that $N\leqslant K\leqslant L$ and obtain
\begin{eqnarray}\label{G-1-222}
  \mathcal{G}_1 & \ll & \frac{T^{2\kappa+\varepsilon}}{(NMKL)^{3/4}}\mathscr{A}_{-}(N,M,K,L;T^{-1/2})   \nonumber  \\
  & \ll & \frac{T^{2\kappa+\varepsilon}}{(NMKL)^{3/4}} \big(T^{-1/8}N^{7/8}+N^{1/2}\big)
          \big(T^{-1/8}K^{7/8}+K^{1/2}\big)\big(T^{-1/4}L^{7/4}+L\big)      \nonumber  \\
  & \ll & T^{2\kappa+\varepsilon}(NK)^{-1/4}L^{-1/2}\big(T^{-1/8}N^{3/8}+1\big)\big(T^{-1/8}K^{3/8}+1\big)\big(T^{-1/4}L^{3/4}+1\big) \nonumber  \\
  & \ll & T^{2\kappa+\varepsilon}(NK)^{-1/4}L^{-1/2}
          \big(T^{-1/4}(NK)^{3/8}+T^{-1/8}K^{3/8}+1\big) \big(T^{-1/4}L^{3/4}+1\big)    \nonumber  \\
  & \ll & T^{2\kappa-1/4+\varepsilon}(NK)^{1/8}L^{-1/2}    \nonumber  \\
  &     & +T^{2\kappa+\varepsilon}(NK)^{-1/4}L^{-1/2} \big(T^{-1/8}K^{3/8}+1\big) \big(T^{-1/4}L^{3/4}+1\big)  \nonumber  \\
  & \ll & T^{2\kappa-1/4+\varepsilon}L^{-1/4}+T^{2\kappa+\varepsilon}(NK)^{-1/4}L^{-1/2}\big(T^{-3/8}L^{9/8}+1\big)    \nonumber  \\
  & \ll & T^{2\kappa-1/4+\varepsilon}+T^{2\kappa+\varepsilon}(NK)^{-1/4}L^{-1/2}\big(T^{-3/8}L^{9/8}+1\big).
\end{eqnarray}
From (\ref{G_1-111}) and (\ref{G-1-222}), we get
\begin{equation*}
   \mathcal{G}_1  \ll  T^{2\kappa-1/8+\varepsilon}+T^{2\kappa+\varepsilon}\cdot
   \min\bigg(\frac{(NK)^{1/4}}{L},\frac{T^{-3/8}L^{9/8}+1}{(NK)^{1/4}L^{1/2}}\bigg).
\end{equation*}
\textbf{Case 1} If $L\gg T^{1/3}$, then $T^{-3/8}L^{9/8}\gg1,$ we get
\begin{eqnarray}\label{G_1-case-1}
  \mathcal{G}_1 & \ll & T^{2\kappa-1/8+\varepsilon}+T^{2\kappa+\varepsilon}\cdot
   \min\bigg(\frac{(NK)^{1/4}}{L},\frac{T^{-3/8}L^{9/8}}{(NK)^{1/4}L^{1/2}}\bigg) \nonumber \\
                         & \ll & T^{2\kappa-1/8+\varepsilon}+T^{2\kappa+\varepsilon}\bigg(\frac{(NK)^{1/4}}{L}\bigg)^{1/2}
          \bigg(\frac{T^{-3/8}L^{9/8}}{(NK)^{1/4}L^{1/2}}\bigg)^{1/2}   \nonumber \\
    & \ll & T^{2\kappa-1/8+\varepsilon}+T^{2\kappa-3/16+\varepsilon}L^{-3/16}\ll T^{2\kappa-1/8+\varepsilon}.
\end{eqnarray}
\textbf{Case 2} If $L\ll T^{1/3}$, then $T^{-3/8}L^{9/8}\ll1.$
By noting that $M\asymp L\asymp\max(N,M,K,L)$ and Lemma \ref{kong-lamma}, we have
\begin{equation*}
 T^{-1/2}\gg|\eta|\gg(nmk\ell)^{-1/2}\max(n,m,k,\ell)^{-3/2}\asymp (NK)^{-1/2}L^{-5/2}.
\end{equation*}
Hence, we obtain
\begin{eqnarray}\label{G_1-case-2}
   \mathcal{G}_1 & \ll & T^{2\kappa-1/8+\varepsilon}+T^{2\kappa+\varepsilon}\min\bigg(\frac{(NK)^{1/4}}{L},\frac{1}{(NK)^{1/4}L^{1/2}}\bigg)
                           \nonumber \\
   & \ll & T^{2\kappa-1/8+\varepsilon}+T^{2\kappa+\varepsilon}\bigg(\frac{(NK)^{1/4}}{L}\bigg)^{1/4}\bigg(\frac{1}{(NK)^{1/4}L^{1/2}}\bigg)^{3/4}
                            \nonumber \\
   & = & T^{2\kappa-1/8+\varepsilon}+T^{2\kappa+\varepsilon}(NK)^{-1/8}L^{-5/8}
                            \nonumber \\
   & \ll & T^{2\kappa-1/8+\varepsilon}+T^{2\kappa+\varepsilon}(T^{-1/2})^{1/4}\ll T^{2\kappa-1/8+\varepsilon}.
\end{eqnarray}
Combining (\ref{G_1-case-1}) and (\ref{G_1-case-2}), we get
\begin{equation}\label{G-1-estimate}
   \mathcal{G}_1\ll T^{2\kappa-1/8+\varepsilon}.
\end{equation}

Now, we estimate $\mathcal{G}_2$. By a splitting argument, we get that there exists some $\delta$ satisfying $T^{-1/2}\ll \delta\ll 1$ such that
\begin{equation*}
  \mathcal{G}_2\ll\frac{T^{2\kappa-1/2+\varepsilon}}{(NMKL)^{3/4}\delta}\times\sum_{\substack{\delta<|\eta|\leqslant2\delta\\ \eta\not=0}}1.
\end{equation*}
By Lemma\ref{Zhai-lemma-5}, we get
\begin{eqnarray}\label{G-2-111}
  \mathcal{G}_2 & \ll & \frac{T^{2\kappa-1/2+\varepsilon}}{(NMKL)^{3/4}\delta}\mathscr{A}_1(N,M,K,L;2\delta)
                               \nonumber \\
     & \ll & \frac{T^{2\kappa-1/2+\varepsilon}}{(NMKL)^{3/4}\delta} (\delta L^{1/2}NMK+NKL^{1/2})
                                \nonumber \\
     & = &   T^{2\kappa-1/2+\varepsilon}(NK)^{1/4}+T^{2\kappa-1/2+\varepsilon}\delta^{-1}(NK)^{1/4}L^{-1}
                                \nonumber \\
     & \ll &  T^{2\kappa-1/2+\varepsilon}y^{1/2}+T^{2\kappa-1/2+\varepsilon}\delta^{-1}(NK)^{1/4}L^{-1}
                                \nonumber \\
     & \ll &  T^{2\kappa-1/8+\varepsilon}+T^{2\kappa-1/2+\varepsilon}\delta^{-1}(NK)^{1/4}L^{-1} .
\end{eqnarray}
On the other hand, by Lemma \ref{Zhai-lemma-3}, without loss of generality, we assume that $N\leqslant K\leqslant L$ and obtain
\begin{eqnarray}\label{G-2-222}
  \mathcal{G}_2 & \ll & \frac{T^{2\kappa-1/2+\varepsilon}}{(NMKL)^{3/4}\delta}\times\mathscr{A}_{-}(N,M,K,L;2\delta)
                           \nonumber   \\
   & \ll & \frac{T^{2\kappa-1/2+\varepsilon}}{(NMKL)^{3/4}\delta} \big(\delta^{1/4}N^{7/8}+N^{1/2}\big)
           \big(\delta^{1/4}K^{7/8}+K^{1/2}\big)\big(\delta^{1/2}L^{7/4}+L\big)
                            \nonumber   \\
   & \ll & T^{2\kappa-1/2+\varepsilon}(NK)^{-1/4}L^{-1/2}\delta^{-1}\big(\delta^{1/4}N^{3/8}+1\big)
           \big(\delta^{1/4}K^{3/8}+1\big)\big(\delta^{1/2}L^{3/4}+1\big)
                           \nonumber   \\
   & \ll &  T^{2\kappa-1/2+\varepsilon}(NK)^{-1/4}L^{-1/2}\delta^{-1}
            \big(\delta^{1/2}(NK)^{3/8}+\delta^{1/4}K^{3/8}+1\big) \big(\delta^{1/2}L^{3/4}+1\big)
                           \nonumber   \\
   & \ll & T^{2\kappa-1/2+\varepsilon}(NK)^{1/8}L^{-1/2}\delta^{-1/2}
                           \nonumber   \\
   &     &  +T^{2\kappa-1/2+\varepsilon}(NK)^{-1/4}L^{-1/2}\delta^{-1}\big(\delta^{1/4}K^{3/8}+1\big) \big(\delta^{1/2}L^{3/4}+1\big)
                            \nonumber   \\
   & \ll & T^{2\kappa-1/4+\varepsilon}L^{-1/4}+T^{2\kappa-1/2+\varepsilon}(NK)^{-1/4}L^{-1/2}\delta^{-1}\big(\delta^{3/4}L^{9/8}+1\big).
\end{eqnarray}
From (\ref{G-2-111}) and (\ref{G-2-222}), we get
\begin{equation*}
 \mathcal{G}_2  \ll  T^{2\kappa-1/8+\varepsilon}+T^{2\kappa-1/2+\varepsilon}\delta^{-1}
                         \cdot\min\bigg(\frac{(NK)^{1/4}}{L},\frac{\delta^{3/4}L^{9/8}+1}{(NK)^{1/4}L^{1/2}}\bigg).
\end{equation*}
\textbf{Case 1} If $\delta\gg L^{-3/2}$, then $\delta^{3/4}L^{9/8}\gg1$, we get (recall $\delta\gg T^{-1/2}$)
\begin{eqnarray}\label{G_2-case-1}
   \mathcal{G}_2 & \ll & T^{2\kappa-1/8+\varepsilon}+T^{2\kappa-1/2+\varepsilon}\delta^{-1}
                         \cdot\min\bigg(\frac{(NK)^{1/4}}{L},\frac{\delta^{3/4}L^{9/8}}{(NK)^{1/4}L^{1/2}}\bigg)
                          \nonumber   \\
              & \ll &  T^{2\kappa-1/8+\varepsilon}+ T^{2\kappa-1/2+\varepsilon}\delta^{-1}
                      \bigg(\frac{(NK)^{1/4}}{L}\bigg)^{1/2}   \bigg(\frac{\delta^{3/4}L^{9/8}}{(NK)^{1/4}L^{1/2}}\bigg)^{1/2}
                         \nonumber   \\
              & \ll & T^{2\kappa-1/8+\varepsilon} +T^{2\kappa-1/2+\varepsilon}\delta^{-5/8}L^{-3/16}
                         \nonumber   \\
              & \ll & T^{2\kappa-1/8+\varepsilon}+T^{2\kappa-1/2+\varepsilon}T^{5/16}L^{-3/16}\ll T^{2\kappa-1/8+\varepsilon}.
\end{eqnarray}
\textbf{Case 2} If $\delta\ll L^{-3/2}$, then $\delta^{3/4}L^{9/8}\ll1$. By Lemma \ref{kong-lamma}, we have
\begin{equation*}
   \delta\gg|\eta|\gg(nmk\ell)^{-1/2}\max(n,m,k,\ell)^{-3/2}\asymp (NK)^{-1/2}L^{-5/2}.
\end{equation*}
 Therefore, we obtain (recall $\delta\gg T^{-1/2}$)
\begin{eqnarray} \label{G_2-case-2}
  \mathcal{G}_2 & \ll & T^{2\kappa-1/8+\varepsilon}+T^{2\kappa-1/2+\varepsilon}\delta^{-1}\cdot
                         \min\bigg(\frac{(NK)^{1/4}}{L},\frac{1}{(NK)^{1/4}L^{1/2}}\bigg)
                              \nonumber   \\
    & \ll & T^{2\kappa-1/8+\varepsilon}+T^{2\kappa-1/2+\varepsilon}\delta^{-1}\bigg(\frac{(NK)^{1/4}}{L}\bigg)^{1/4}
             \bigg(\frac{1}{(NK)^{1/4}L^{1/2}}\bigg)^{3/4}
                              \nonumber   \\
    & \ll & T^{2\kappa-1/8+\varepsilon}+T^{2\kappa-1/2+\varepsilon}\delta^{-1}(NK)^{-1/8}L^{-5/8}
                             \nonumber   \\
     & \ll & T^{2\kappa-1/8+\varepsilon}+T^{2\kappa-1/2+\varepsilon}\delta^{-1}\delta^{1/4}
                               \nonumber   \\
     & \ll & T^{2\kappa-1/8+\varepsilon}+T^{2\kappa-1/2+\varepsilon}T^{3/8}\ll  T^{2\kappa-1/8+\varepsilon}.
\end{eqnarray}
Combining (\ref{G_2-case-1}) and (\ref{G_2-case-2}), we get
\begin{equation}\label{G-2-estimate}
   \mathcal{G}_2\ll T^{2\kappa-1/8+\varepsilon}.
\end{equation}

 For $\mathcal{G}_3$, by a splitting argument and Lemma \ref{Zhai-lemma-5} again, we get
\begin{eqnarray} \label{G-3-estimate}
   \mathcal{G}_3 & \ll & \frac{T^{2\kappa-1/2+\varepsilon}}{(NMKL)^{3/4}\delta}\times
                         \sum_{\substack{\delta<|\eta|\leqslant2\delta\\ \delta\gg 1}}1
                             \nonumber \\
    & \ll & \frac{T^{2\kappa-1/2+\varepsilon}}{(NMKL)^{3/4}\delta}\cdot \delta L^{1/2}NMK\ll  T^{2\kappa-1/2+\varepsilon}(NK)^{1/4}  \nonumber \\
    & \ll &  T^{2\kappa-1/2+\varepsilon}y^{1/2}\ll T^{2\kappa-1/8+\varepsilon}.
\end{eqnarray}
Comnining (\ref{S_2-estimate}), (\ref{G-fenjie}), (\ref{G-1-estimate}), (\ref{G-2-estimate}) and (\ref{G-3-estimate}), we get
\begin{equation}\label{S_2-upper}
   \int_T^{2T}S_2(x)\mathrm{d}x\ll T^{2\kappa-1/8+\varepsilon}.
\end{equation}
In the same way, we can prove that
\begin{equation}\label{S_3-upper}
   \int_T^{2T}S_3(x)\mathrm{d}x\ll T^{2\kappa-1/8+\varepsilon}.
\end{equation}

  From (\ref{A^4-asymp})-(\ref{S_4-estimate}), (\ref{S_2-upper}) and (\ref{S_3-upper}), we get
\begin{equation}
   \int_T^{2T}A^4(x)\mathrm{d}x=\frac{3}{32\pi^4}s_{4;2}(\tilde{a})\int_{T}^{2T}x^{2\kappa-1}\mathrm{d}x+O(T^{2\kappa-1/8+\varepsilon}),
\end{equation}
which implies Theorem \ref{4-power-theorem} immediately.

\bigskip
\bigskip

\textbf{Acknowledgement}

   The authors would like to express the most and the greatest sincere gratitude to Professor Wenguang Zhai for his valuable
advice and constant encouragement.


\begin{thebibliography}{99}
  \bibitem{Cai}Y. C. Cai, \textit{On the third and fourth power moments of Fourier coefficients of cusp forms}, Acta Math. Sinica (N.S.),
                          \textbf{13} (4) (1997) 443--452.

  \bibitem{Deligne}P. Deligne, \textit{ La conjecture de Weil. I}, Publ. Math. Inst. Hautes \'{E}tudes Sci., \textbf{43} (1) (1974) 273--307.

  \bibitem{Ivic}A. Ivi\'{c}, \textit{Large values of certain number-theoretic error terms}, Acta Arith., \textbf{56} (2) (1990) 135--159.

 \bibitem{Ivic-Sargos}A. Ivi\'{c} and P. Sargos, \textit{On the higher power moments of the error term in the divisor problem}, Illinois J. Math.,
                                                   \textbf{51} (2) (2007) 353--377.

  \bibitem{Joris}H. Joris, \textit{$\Omega$-S\"{a}tze f\"{u}r gewisse multiplikative arithmetische Funktionen}, Comment. Math. Helv.,
                           \textbf{48} (1) (1973) 409--435.

  \bibitem{Jutila}M. Jutila, \textit{Riemann's zeta-function and the divisor problem}, Ark. Mat., \textbf{21} (1) (1983) 75--96.

  \bibitem{Kong}K. L. Kong, \textit{Some mean value theorems for certain error terms in analytic number theory},
                             Master degree thesis, The University of Hong Kong (2014)                                                                     .

  \bibitem{Robert-Sargos} O. Robert and P. Sargos, \textit{Three-dimensional exponential sums with monomials},
                            J. Reine Angew. Math., \textbf{591} (2006) 1--20.

  \bibitem{Tsang}K. M. Tsang, \textit{Higher-power moments of~$\Delta(x),\,E(t)$~and~$P(x)$}, Proc. London Math. Soc., \textbf{65} (3) (1992) 65--84.

  \bibitem{Zhai-1}W. G. Zhai, \textit{On higher-power moments of~$\Delta(x)$}, Acta Arith., \textbf{112} (2004) 367--395.

  \bibitem{Zhai-2}W. G. Zhai, \textit{On higher-power moments of~$\Delta(x)$(II)}, Acta Arith., \textbf{114} (2004)  35--54.

  \bibitem{Zhai-3}W. G. Zhai, \textit{On higher-power moments of~$\Delta(x)$(III)}, Acta Arith., \textbf{118} (2005) 263--281.











\end{thebibliography}
\end{document}